\documentclass[12pt]{article}

\usepackage{amsrefs}
\usepackage{amsmath,amssymb,amsthm,enumerate}
\usepackage{tikz,graphicx}
\usepackage[all,cmtip]{xy}
\usepackage[applemac]{inputenc}

\parindent 0pt
\parskip 7pt

\newcommand \al{\alpha}
\newcommand \bs{\backslash}
\newcommand \C{{\mathbb C}}

\newcommand \CF{\mathcal F}
\newcommand \CN{\mathcal N}
\newcommand \CO{\mathcal O}

\newcommand \e{\emph}

\newcommand \fin{\mathrm{fin}}
\newcommand \g{\mathfrak g}
\newcommand \Ga{\Gamma}
\newcommand \ga{\gamma}
\newcommand \GL{\operatorname{GL}}

\newcommand \Hom{\operatorname{Hom}}
\newcommand \la{\lambda}
\newcommand \La{\Lambda}

\newcommand \M{\operatorname M}

\newcommand \N{{\mathbb N}}
\newcommand \ol{\overline}
\newcommand \om{\omega}
\newcommand \Q{{\mathbb Q}}
\newcommand \R{{\mathbb R}}
\newcommand \SL{\operatorname{SL}}
\newcommand \sm{\smallsetminus}
\newcommand \Spec{\operatorname{Spec}}
\newcommand \supp{\operatorname{supp}}

\newcommand \tr{\operatorname{tr}}
\newcommand \vol{\operatorname{vol}}
\newcommand \z{\mathfrak z}

\renewcommand \1{{\bf 1}}
\renewcommand \Im{\operatorname{Im}}
\renewcommand \({\left(}
\renewcommand \){\right)}

\renewcommand{\sp}[1]{\left\langle #1\right\rangle}
\newcommand{\norm}[1]{\left\| {#1}\right\|}

\newtheorem{theorem}{Theorem}[section]

\newtheorem{lemma}[theorem]{Lemma}

\newtheorem{proposition}[theorem]{Proposition}
\theoremstyle{definition}
\newtheorem{remark}[theorem]{Remark}

\newtheorem{definition}[theorem]{Definition}

\begin{document}

\pagestyle{myheadings} \markright{SPECTRAL THEORY FOR NON-UNITARY TWISTS}

\title{Spectral theory for non-unitary twists}
\author{Anton Deitmar}
\date{}
\maketitle

{\bf Abstract:} 
Let $G$ be a Lie-group and $\Ga\subset G$ a cocompact lattice. For a finite-dimensional, not necessarily unitary representation $\om$ of $\Ga$ we show that the $G$-representation on $L^2(\Ga\bs G,\om)$ admits a complete filtration with irreducible quotients.
As a consequence, we show the trace formula for non-unitary twists and arbitrary locally compact groups.

$$ $$

\tableofcontents

\newpage
\section*{Introduction}
For unitary representations of locally compact groups there is a general spectral theory, expressing such representations as direct integrals of irreducibles or even, if the representation is, say, trace class, as direct sums of irreducibles.
For non-unitary representations there is no spectral theory in general.
In this paper we introduce a spectral theory for representations, non-unitarily induced from cocompact lattices.
These representations occur naturally in extensions of the trace formula \cite{mueller}.
In this paper we use the spectral analysis of the group Laplacian to deduce that for a Lie group these representations admit complete filtrations with irreducible graded steps.

\section{Trace class representations}

Let $G$ be a locally compact group.
For the convenience of the reader we briefly recall the definition of the space $C_c^\infty(G)$ of test functions on $G$.

\begin{definition}
First, if $L$ is a Lie group, then $C_c^\infty(L)$ is defined as the space of all infinitely differentiable functions of compact support on $L$.
The space $C_c^\infty(L)$ is the inductive limit of all $C_K^\infty(L)$, where $K\subset L$ runs through all compact subsets of $L$ and $C_K^\infty(L)$ is the space of all smooth functions supported inside $K$. The latter is a Fr\'echet space equipped with the supremum norms over all derivatives. Then $C_c^\infty(L)$ is equipped with the inductive limit topology in the category of locally convex spaces as defined in \cite{Schaef}, Chap II, Sec. 6.

Next, suppose the locally compact group $H$ has the property that $H/H^0$ is compact, where $H^0$ is the connected component.
Let $\CN$ be the family of all normal closed subgroups $N\subset H$ such that $H/N$ is a Lie group with finitely many connected components.
We call $H/N$ a \e{Lie quotient} of $H$.
Then, by \cite{MZ}, the set $\CN$ is directed by inverse inclusion and 
$$
H\cong \lim_{\substack{\leftarrow\\ N}}H/N,
$$
where the inverse limit runs over the set $\CN$.
So $H$ is a projective limit of Lie groups.
The space $C_c^\infty(H)$ is then defined to be the sum of all spaces $C_c^\infty(H/N)$ as $N$ varies in $\CN$.
Then $C_c^\infty(H)$ is the inductive limit over all $C_c^\infty(L)$ running over all Lie quotients $L$ of $H$ and so $C_c^\infty(H)$ again is equipped with the inductive limit topology in the category of locally convex spaces.

Finally to the general case.
By \cite{MZ} one knows that every locally compact group $G$ has an open subgroup $H$ such that $H/H^0$ is compact, so $H$ is a projective limit of connected Lie groups in a canonical way.
A Lie quotient of $H$ then is called a \e{local Lie quotient} of $G$.
We have the notion $C_c^\infty(H)$ and for any $g\in G$ we define $C_c^\infty(gH)$ to be the set of functions $f$ on the coset $gH$ such that $x\mapsto f(gx)$ lies in $C_c^\infty(H)$.
We then define $C_c^\infty(G)$ to be the sum of all $C_c^\infty(gH)$, where $g$ varies in $G$.
Then $C_c^\infty(G)$  is the inductive limit over all finite sums of the spaces $C_c^\infty(gH)$.
Note that the definition is independent of the choice of $H$, since, given a second open group $H'$, the support of any given $f\in C_c(G)$ will only meet finitely many left cosets $gH''$ of the open subgroup $H''=H\cap H'$.
It follows in particular, that $C_c^\infty(G)$ is the inductive limit over a family of Fr\'echet spaces.
This concludes the definition of the space $C_c^\infty(G)$ of test functions.

\begin{remark}
\begin{enumerate}[\rm (a)]
\item Note that the inductive limit topology in the category of locally convex spaces differs from the inductive limit topology in the category of topological spaces, as is made clear in \cite{Gloeck}.
\item Note that for a linear functional $\al:C_c^\infty(G)\to\C$ to be continuous, it suffices, that for any 
local Lie quotient $L$ of $G$ and any compact subset $K\subset L$ and any sequence $f_n\in C_K^\infty(L)$ with $f_n\to 0$ in the Fr\'echet space $C_K^\infty(L)$ and every $g\in G$ the sequence $\al(L_gf_n)$ tends to zero, where $L_gf(x)=f(g^{-1}x)$.
This is deduced from \cite{Schaef}, Chap II, Sec. 6.1.
\item If a locally compact group $G$ is a projective limit of Lie groups $G_j=G/N_j$, then it follows from \cite{Neeb}*{Cor. 12.3}, that every irreducible continuous representation $\pi$ factors through some $G_j$. This reduces many issues related to distribution characters to the case of Lie groups.
\end{enumerate}
\end{remark}
\end{definition} 

\begin{definition}
A representation $(\pi,V_\pi)$ of a locally compact group $G$ is called a \e{compact representation}, if $\pi(f)$ is a compact  operator for every $f\in C_c^\infty(G)$.
It is called a \e{trace class representation}, if $\pi(f)$ is trace class  for every $f\in C_c^\infty(G)$.
We say that $G$ is a \e{trace class group}, if every irreducible unitary representation is trace class.
See \cite{DvD} for more on trace class groups.
\end{definition}

\begin{definition}
Let $G$ be a unimodular locally compact group and let $\Ga\subset G$ be a discrete subgroup. Then there exists a non-vanishing, $G$-invariant Radon measure on the quotient $\Ga\bs G$, which is unique up to scaling \cite{HA2} and the induced representation $R_g\phi(x)=\phi(xg)$ of $G$ on the Hilbert space $L^2(\Ga\bs G)$ is unitary. 
\end{definition}

Note that if $G$ admits a cocompact discrete subgroup $\Ga$, then $G$ is unimodular and $\Ga$ is a lattice.

\begin{proposition}
For a unimodular locally compact group $G$ and a discrete subgroup $\Ga\subset G$ the following are equivalent:
\begin{enumerate}[\rm (a)]
\item $\Ga\bs G$ is compact.
\item The representation of $G$ on $L^2(\Ga\bs G)$ is trace class.
\item The representation of $G$ on $L^2(\Ga\bs G)$ is compact.
\end{enumerate}
\end{proposition}

\begin{proof} 
(a)$\Rightarrow$(b)
is the classical trace formula argument and can be found in \cite{HA2}, Chapter 9.

(b)$\Rightarrow$(c) is trivial.

(c)$\Rightarrow$(a): Assume that $L^2(\Ga\bs G)$ is a compact representation, but $\Ga\bs G$ is not compact.
Then for every compact unit-neighborhood $U\subset G$ and every compact set $K\subset G$ there exists $x\in G$ such that $\Ga xU\cap \Ga K=\emptyset$, for otherwise the element $\Ga x$ of $\Ga\bs G$ lies in the compact set $\Ga\bs\Ga KU^{-1}$.

Applying this iteratedly, one obtains a sequence $x_1,x_2,\dots\in G$ such that 
$$
\Ga x_iU^2\cap \Ga x_j U^2=\emptyset
$$
for all $i\ne j$.
Fix a symmetric unit-neighborhood $V$ such that $V^3\subset U$ and let 
$$
\phi_j=c_j\1_{\Ga x_jV^2},
$$
where $c_j>0$ is such that $\norm{\phi_j}^2=\int_{\Ga\bs G}|\phi_j(x)|^2\,dx=1$.
Fix some $f\in C_c^\infty(G)$ with support in $V$ and such that $f\ge 0$ and $\int_Gf(x)\,dx=1$.
Now $\supp(R(f)\phi_j)\subset \Ga x_j V^3\subset \Ga x_j U$ and therefore the supports of $R(f)\phi_j$ for varying $j$ are disjoint, hence these vectors in the Hilbert space $L^2(\Ga\bs G)$ are pairwise orthogonal.
As $R(f)$ is a compact operator, the sequence $(R(f)\phi_j)$ must have a convergent subsequence, but as the vectors are pairwise orthogonal, there must exist a subsequence  with $\norm{R(f)\phi_{j_k}}\to 0$.
Since the integral of $f$ is 1, we have
$$
c_j\1_{\Ga x_j V}\le R(f)\phi_j.
$$
Now assume that $V^2\subset \bigcup_{l=1}^nVz_l$ with $z_l\in G$, then $\Ga x_j V^2\subset \bigcup_{l=1}^n \Ga x_j Vz_l$ and so
$$
1=\norm{\phi_j}^2=c_j^2\vol(\Ga\bs \Ga x_jV^2)\le nc_j^2\vol(\Ga\bs\Ga x_j V)
$$
and
$$
\norm{R(f)\phi_j}^2\ge \norm{c_j\1_{\Ga x_jV}}^2\ge\frac{c_j^2}n\vol(\Ga x_j V^2)=\frac1n.
$$
So there is no subsequence with $\norm{R(f)\phi_{j_k}}\to 0$.
\end{proof}

\begin{remark} (Counterexample)
In \cite{DvD}, last Remark of Section 4, it is asked, whether any locally compact group $G$ admitting a cocompact lattice must be trace class.
We now give a counterexample. Let $G=\M_2(\R)\rtimes\SL_2(\R)$, where $\M_2(\R)$ is the space of real $2\times 2$ matrices.
Let $D$ be a quaternion division algebra over $\Q$ which splits at infinity.
Fix a splitting $D\hookrightarrow \M_2(\R)$ and thus consider $D$ a $\Q$-subalgebra of $\M_2(\R)$. Fix an order $\CO\subset D$ (see \cite{Reiner}), and let $\CO^1\subset\CO^\times$ be the subgroup of all elements of determinant 1. Set
$$
\Ga=\CO\rtimes \CO^1.
$$
Since $\CO$ is a cocompact lattice in $\M_2(\R)$ and $\CO^1$ is a cocompact lattice in $\SL_2(\R)$, the group $\Ga$ is a cocompact lattice in $G$.

Next we need to show that $G$ is not trace class. In \cite{DvD}, Proposition 1.9, it is shown that $H=\R^2\rtimes\SL_2(\R)$ is not trace class. 
Let $N\subset G$ be the set of all elements of the form $(A,1)$, where the matrix $A$ has zeros in the first column. Then $N$ is closed and normal in $G$ and $G/N\cong H$.
So the irreducible representation of $H$, which is not trace class, induces an irreducible representation of $G$, which is not trace class.
\end{remark}

\begin{remark}
It seems to be an open question whether for any lattice $\Ga$ the spectral multiplicities in the discrete spectrum of $L^2(\Ga\bs G)$ are finite.
In other words, let $\Ga$ be a lattice in the locally compact group $G$ and let $(\pi,V_\pi)\in\widehat G$ be an irreducible unitary representation, is it true that
$$
\Hom_G(V_\pi,L^2(\Ga\bs G)
$$
is finite-dimensional?
\end{remark}

\section{The spectral filtration}
By a \e{representation} we shall mean a continuous representation on a Banach space.

\begin{definition}
Let $L$ be a linearly ordered set.
For $a<b$ in $L$ we consider the closed interval $[a,b]$ of all $x\in L$ with $a\le x\le b$.
The elements $a<b$ are called \e{neighbored}, if $[a,b]=\{a,b\}$, i.e., if there is no element between them.

A linearly ordered set $C$ is called \e{complete}, if every subset of $C$ possesses a supremum and an infimum.

For a given linearly ordered set $L$ there is a uniquely defined \e{completion} $C$, which is a complete ordered set which contains $L$ as a substructure such that $L$ is dense in $C$ in the sense that every $c\in C$ is the supremum or the infimum of a subset of $L$.
\end{definition}

\begin{definition}
A \e{sub-tower} is a linearly ordered set $L$, such that every $x\in L$ has a neighbor. 
A \e{tower} is a linearly ordered set which is the completion of a sub-tower.
In particular, a tower $L$ contains a minimum $\min(L)$ and a maximum $\max(L)$.
\end{definition}

\begin{definition}
Let $L$ be a tower.
Let $(R,V)$ be a representation of a locally compact group $G$ on a Banach space $V$.
A \e{complete $L$-filtration} on $(R,V)$ is a family of closed, $G$-stable subspaces $(F_i)_{i\in L}$ such that the following hold:
\begin{enumerate}[\quad\rm (a)]
\item $F_{\min(L)}=0$ and $F_{\max(L)}=V$,
\item if $i\le j$, then $F_i\subset F_j$, 
\item if $i<j$ are neighbored, then $F_j/F_i$ is irreducible, 
\item if $b\in L$ has no lower neighbor, then $F_b$ is the closure of $\bigcup_{j<b}F_j$,
\item if $a$ has no upper neighbor, then $F_a=\bigcap_{j>a}F_j$.
\end{enumerate}
\end{definition}

\begin{definition}
Let $(F_j)_{j\in L}$ be a complete $L$-filtration of $(R,V)$ for the tower $L$.
For a given irreducible representation $(\pi,V_\pi)$ of $G$ we define the \e{multiplicity} $m_L(\pi)$ to be the number of pairs $i<j$ in $L$ such that the representation on $F_j/F_i$ is isomorphic to $\pi$. This multiplicity may be zero, a natural number, or infinity.
\end{definition}

\begin{definition}
Let $(R,V)$ be a representation. A \e{subquotient} of $(R,V)$ is a representation of the form $P/Q$, where $Q\subset P$ are closed, $G$-stable subspaces of $V$.

A representation $(R,V)$ is called \e{discrete}, if every subquotient has an irreducible subquotient.
This means that for any two closed, $G$-stable subspaces $Q\subset P$ there exist closed, $G$-stable subspaces $Q\subset U\subset W\subset P$ such that $W/U$ is irreducible.
\end{definition}

\begin{lemma}
Let $(R,V)$ be a discrete representation of the locally compact group $G$.
\begin{enumerate}[\rm (a)]
\item There exists a complete filtration $\CF$ of $V$ for some tower $L$.
\item If additionally the representation $R$ is trace class, the multiplicities are finite.
\end{enumerate}
\end{lemma}

\begin{proof}
(a) A filtration $\CF$ of $(R,V)$, indexed by a sub-tower, is called \e{admissible}, if every $i\in L$ has at least one neighbor $j$ such that $F_i/F_j$ or $F_j/F_i$ respectively, is irreducible.
We apply the Lemma of Zorn to the set of all admissible filtrations $\CF$, where we say that $(\CF,L)\le (\CF',L')$ if $L$ is a subset of $L'$ and the filtration steps of $\CF$ and $\CF'$ agree on $L$.
We get a maximal admissible filtration. We complete $L$ by Dedekind cuts. If $D\subset L$ is a Dedekind cut, i.e., a subset with the property $x<y\in D\Rightarrow x\in D$, then we set $F_D=\ol{\bigcup_{i\in D}F_i}$. If $D\in L$ already, i.e., there exists $\al\in L$ such that $D=\{i\in L: i\le \al\}$, then $F_D=F_\al$, so this filtration extends $\CF$. If $D$ has no neighbor, then it is not in $L$ and $F_D$ is the closure of $\bigcup_{j<D}F_j$ by definition. On the other hand, we have $F_D=\bigcap_{j>D}F_j$, since otherwise there would be an irreducible subquotient between $F_D$ and this intersection, which would contradict the maximality of $\CF$.
Next if $D$ has an upper neighbor, but no lower, then we get $F_D=\ol{\bigcup_{j<D}F_j}$ again by maximality and likewise, we get $F_D=\bigcap_{j>D}F_j$ if $D$ has a lower neighbor, but no upper. This shows that there exists a complete filtration.

(b) Assume the representation to be trace class. By choosing an orthonormal basis which is compatible with the filtration, we see that the trace of $R(f)$ equals the trace on the associated graded representation. This implies finiteness of the multiplicities.
\end{proof}

\section{Admissible representations}
In this section, we assume that  $G$ is a Lie group. Choose a left-invariant metric on $G$ and let $\Delta$ denote the Laplace operator for this metric. We call such a $\Delta$ a \e{group-Laplacian}.
Let $\g_\R$ be the real Lie algebra of $G$ and $\g$ its complexification.
The universal enveloping algebra $U(\g)$ can be identified with the algebra of left-invariant differential operators on $G$, so $\Delta$ can be viewed as an element of $U(\g)$.

By a \e{representation} of $G$ we mean a group homomorphism $R:G\to \GL(V)$ to the group of invertible bicontinuous linear operators on some Banach space $V$ such that the map $G\times V\to V$, $(g,v)\mapsto \pi(g)v$ is continuous.
The space of \e{smooth vectors} $V^\infty$ then is defined as the space of all $v\in V$ such that $G\to V$, $x\mapsto R(x)v$ is infinitely differentiable.
The universal enveloping algebra $U(\g)$ acts on the dense subspace $V^\infty$ of smooth vectors.

\begin{definition}\label{def1.2}
A representation $(R,V)$ of $G$ is called \e{$\Delta$-admissible}, if 
\begin{enumerate}[\rm(a)]
\item there is a dense subset $\Lambda_R\subset\C$, such that for each $\la\in\La_R$ the operator $R(\Delta-\la)^{-1}$ is defined and extends to a continuous operator on the space $V$.
For every $G$-stable closed subspace $U\subset V$ one has $(\Delta-1)^{-1}U\subset U$,

\item for each $\sigma\in\C$ the generalized eigenspace
$$
V(\Delta,\sigma)=\bigcup_{n\in\N}\ker(\Delta-\sigma)^n\subset V^\infty
$$
is finite-dimensional,
\item the set $\Spec_R(\Delta)$ of all $\sigma\in\C$ with $V(\Delta,\sigma)\ne 0$ has no accumulation point in $\C$,
\item every $v\in V$ can be written as absolutely convergent sum
$$
v=\sum_{\sigma\in\Spec(\Delta)}v_\sigma,
$$
each $v_\sigma\in V(\Delta,\sigma)$ is uniquely determined and the projection map $v\mapsto v_\sigma$ is continuous,
\item for every $\sigma_0\in \Spec(\Delta)$  the space
$$
V(\Delta,\sigma_0)'=\ol{\bigoplus_{\sigma\ne\sigma_0}V(\Delta,\sigma)}
$$
satisfies $V=V(\Delta,\sigma_0)\oplus V(\Delta,\sigma_0)'$ and 
the operator $\Delta-\sigma_0$ has a bounded inverse on $V(\Delta,\sigma_0)'$.
\end{enumerate}

The condition (a) needs explaining: We request that there exists a continuous operator $T$ on $V$ which preserves $V^\infty$ as well as every $G$-stable closed subset and satisfies
$$
T R(\Delta-\la)v=R(\Delta-\la)Tv=v
$$
for every $v\in V^\infty$.
We denote this operator by $R(\Delta-\la)^{-1}$.

We find it convenient to leave out the $R$ in the notation, so we occasionally write $(\Delta-\la)$ instead of $R(\Delta-\la)$ and the same for the inverses.
\end{definition}

\begin{lemma}
Let $(R,V)$ be $\Delta$-admissible and let $U\subset V$ be a closed $G$-stable subspace, then $U$ is $\Delta$-admissible.
\end{lemma}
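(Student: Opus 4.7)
The plan is to transfer each clause (a)--(e) from $V$ to $U$ using two structural identifications: $U^\infty = U\cap V^\infty$ (since $U$ carries the subspace topology, smoothness of $x\mapsto R(x)v$ into $U$ coincides with smoothness into $V$ for $v\in U$), and consequently $U(\Delta,\sigma)=U\cap V(\Delta,\sigma)$. Granted these, (a) for $U$ holds by taking $\Lambda_U:=\Lambda_V$ and restricting $R(\Delta-\lambda)^{-1}$, which already preserves $U$ by the resolvent-preserves-$G$-stable-subspaces clause of (a) for $V$; (b) is immediate since $U(\Delta,\sigma)\subset V(\Delta,\sigma)$ is finite-dimensional; (c) is immediate since $\Spec_U(\Delta)\subset\Spec_V(\Delta)$ inherits having no accumulation point.

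The substance of the argument lies in (d) and (e): one must show that the spectral decomposition in $V$ actually respects $U$. For each $\sigma_0\in\Spec_V(\Delta)$ I would represent the spectral projector $P_{\sigma_0}\colon V\to V(\Delta,\sigma_0)$ as a Cauchy contour integral
$$
P_{\sigma_0} \;=\; -\frac{1}{2\pi i}\oint_C (\Delta-\lambda)^{-1}\,d\lambda,
$$
over a small loop $C$ encircling only $\sigma_0$. Existence of such $C$ inside the resolvent set uses (c) (isolated eigenvalues), (e) for $V$ (perturbation of the bounded inverse on $V(\Delta,\sigma_0)'$), and the explicit finite nilpotent expansion of $(\Delta-\lambda)^{-1}$ on $V(\Delta,\sigma_0)$; a residue computation then verifies the formula. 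By (a) for $V$, $(\Delta-\lambda)^{-1}$ sends $U$ into $U$ for every $\lambda\in\Lambda_V$, and by density of $\Lambda_V$, norm-analyticity of the resolvent, and closedness of $U$, this preservation extends to every $\lambda$ on $C$. Approximating the integral by Riemann sums with $U$-valued integrands then forces $P_{\sigma_0}(U)\subset U$, so that each component $u_\sigma\in V(\Delta,\sigma)$ of a vector $u\in U$ lies in $U(\Delta,\sigma)$; uniqueness, continuity of $u\mapsto u_\sigma$, and absolute convergence in $U$ all descend from (d) for $V$. For (e) one checks $U(\Delta,\sigma_0)'=U\cap V(\Delta,\sigma_0)'$ using the refined (d), and then sees that the bounded inverse of $\Delta-\sigma_0$ on $V(\Delta,\sigma_0)'$ restricts to $U(\Delta,\sigma_0)'$ by the same density-plus-closedness argument applied to $(\Delta-\lambda)^{-1}$ as $\lambda\to\sigma_0$.

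The main technical obstacle is precisely this contour step. Since $\Lambda_V$ is only assumed dense in $\C$ and disjoint from $\R$, the resolvent is a priori defined only on a dense subset of $\C$, so before I can write $\oint_C$ at all I need to verify that $(\Delta-\lambda)^{-1}$ extends analytically, as a bounded operator on $V$, to a full punctured neighborhood of $\sigma_0$, and that this extension continues to preserve $U$. Once these two facts are in place, the remaining verifications of (a)--(e) for $U$ amount to bookkeeping on the data inherited from $V$.
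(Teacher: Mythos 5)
Your proposal is correct in outline, but it reaches the key conclusion --- that the spectral components $u_\sigma$ of a vector $u\in U$ again lie in $U$ --- by a genuinely different route than the paper. You construct the Riesz projection $P_{\sigma_0}$ as a contour integral of the resolvent around the isolated point $\sigma_0$, which forces you to first extend $(\Delta-\lambda)^{-1}$ analytically from the dense set $\Lambda_V$ to a full punctured neighborhood of $\sigma_0$ (via the bounded inverse on $V(\Delta,\sigma_0)'$ from clause (e) plus the finite nilpotent expansion on $V(\Delta,\sigma_0)$), check that this extension still agrees with the clause-(a) resolvent and still preserves $U$, and only then approximate the integral by Riemann sums. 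The paper avoids the contour entirely: it fixes a single $\lambda\in\Lambda_R$ close to $\sigma_0$, forms $T=(\sigma_0-\lambda)(\Delta-\lambda)^{-1}$, shows $\norm{T}<1$ on $V(\Delta,\sigma_0)'$ by a resolvent-identity estimate, and on $V(\Delta,\sigma_0)$ writes $T=1+R$ with $R$ nilpotent so that $T^n=\sum_{k=0}^{N-1}\binom nk R^k$; dividing by $\binom n{N-1},\binom n{N-2},\dots$ and passing to limits of the $U$-valued vectors $T^nu$ peels off $R^{N-1}u_{\sigma_0},R^{N-2}u_{\sigma_0},\dots,u_{\sigma_0}\in U$. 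The paper's iteration thus uses only data literally supplied by the axioms (one resolvent value, one power sequence) and needs no analytic continuation or uniqueness-of-inverse argument, while your contour method is more standard and simultaneously delivers clause (e) for $U$ cleanly; the extension-and-agreement step you flag as the main obstacle is real work, but it is doable with exactly the ingredients you name, so there is no gap, only a longer path. One small credit to your write-up: you treat all five clauses explicitly, including the restriction of the bounded inverse in (e), whereas the paper asserts that only (d) needs proof and leaves the rest implicit.
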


\begin{proof}
The only part of the definition which needs proving, is part (d). More precisely we need to show that if $u\in U$ and $u=\sum_{\sigma}u_\sigma$ is the spectral decomposition in $V$, then $u_\sigma\in U$ for every $\sigma$.
For this let $\sigma_0\in\Spec_R(\Delta)$ and let $\la\in\La_R$ be closer to $\sigma_0$ than any other $\sigma\in\Spec_R(\Delta)$.
Then the operator
$$
T=(\sigma_0-\la)(\Delta-\la)^{-1}
$$
has eigenvalue 1 on $V(\Delta,\sigma_0)$ and eigenvalue of absolute value $<1$ on $V(\Delta,\sigma)$ for every $\sigma_0\ne \sigma\in\Spec_R(\Delta)$.
We write $u=u_{\sigma_0}+u^{\sigma_0}$, where $u^{\sigma_0}=\sum_{\sigma\ne \sigma_0}u_\sigma$.
We first show that $T^nu^{\sigma_0}$ tends to $0$ as $n\to\infty$.
For this note that on the space $V(\Delta,\sigma_0)'$ one has
\begin{align*}
(\Delta-\la)^{-1}&=(\Delta-\la)^{-1}-(\Delta-\sigma_0)^{-1}+(\Delta-\sigma_0)^{-1}\\
&=(\la-\sigma_0)(\Delta-\la)^{-1}(\Delta-\sigma_0)^{-1}+(\Delta-\sigma_0)^{-1}.
\end{align*}
Taking operator norms on both sides and using the triangle inequality we infer that for small values of $|\sigma_0-\la|$ we have
$$
\norm{(\Delta-\la)^{-1}}\le\frac{\norm{(\Delta-\sigma_0)^{-1}}}
{1-(\sigma_0-\la)\norm{(\Delta-\sigma_0)^{-1}}},
$$
where we mean the operator norm on the space $V(\Delta,\sigma_0)'$.
It follows that for $\la$ close enough to $\sigma_0$ the operator norm of $T$ on $V(\Delta,\sigma_0)'$ is $<1$, which implies that $T^nu^{\sigma_0}$ tends to zero.

On $V(\Delta,\sigma_0)$ we write $\Delta=\sigma_0-S$ where $S$ is nilpotent.
So
$$
T=\(1+\sum_{j=1}^{d-1}\frac{S^j}{(\sigma_0-\la)^j}\)
=\(1+N\)
$$
where $N=\sum_{j=1}^{d-1}\frac{S^j}{(\sigma_0-\la)^j}$ is again nilpotent and $d=\dim V(\Delta,\sigma_0)$.
Then on $V(\Delta,\sigma_0)$ we have
$$
T^n=(1+N)^n
=\ \sum_{k=0}^{d-1}\binom nk N^k.
$$
it follows that $T^n\binom n{d-1}^{-1}$ tends to $N^{d-1}$ as $n\to\infty$, 
which implies that $N^{d-1}u_{\sigma_0}$ lies in $U$. 
Next $\(T^n-\binom n{d-1}N^{d-1}\)\binom n{d-2}^{-1}$ 
tends to $N^{d-2}$ which implies that $N^{d-2}u_{\sigma_0}$ lies in $U$. 
We repeat until we reach $N^0u_{\sigma_0}=u_{\sigma_0}\in U$ as claimed.
\end{proof}

\begin{definition}
Let $(R,V)$ be a representation of the locally compact group $G$ and let $\pi$ be an irreducible representation of $G$.
A \e{$\pi$-filtration} in $V$ is a sequence 
$$
F_1'\subset F_1\subset F_2'\subset F_2\subset\dots\subset F_l'\subset F_l
$$
of closed, $G$-stable subspaces such that $F_j/F_j'\cong \pi$ for each $j$.
\end{definition}

\begin{theorem}[Spectral theorem]\label{thm1.6}
Let $(R,V)$ be a $\Delta$-admissible representation of the Lie group $G$.
\begin{enumerate}[\rm (a)]
\item If $V_0\subset V_1$ are closed $G$-stable subspaces, then the sub-quotient $S=V_1/V_0$ is $\Delta$-admissible as well. Each spectral value $\la$ of $S$ is a spectral value of $V$, more precisely, one has
$$
S(\Delta,\la)\cong V_1(\Delta,\la)/V_0(\Delta,\la).
$$
If $m(S,\la)=m(V,\la)$ for all $\la$, then $S=V$.
\item Let $(R,V)$ be $\Delta$-admissible and $\pi$ an irreducible representation of $G$.
Then all maximal $\pi$-filtrations have the same finite length. We call this length $N_{\Ga,\om}(\pi)\in\N_0$ the \e{multiplicity} of $\pi$ in $R$.
\item If $f\in C_c(G)$ is such that the operator $R(f)=\int_Gf(x)R(x)\,dx$ is trace class, then $\pi(f)$ is trace class for every $\pi\in\widetilde G$ with $N_{\Ga,\om}(\pi)>0$ and one has
$$
\tr R(f)=\sum_{\pi\in\widetilde G}N_{\Ga,\om}(\pi)\tr\pi(f).
$$
\item The representation $(R,V)$ is discrete, so there exists a complete filtration on $(R,V)$.
\end{enumerate}
\end{theorem}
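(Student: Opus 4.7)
The preceding lemma shows that each spectral projector $P_\sigma : V \to V(\Delta,\sigma)$ preserves every closed $G$-stable subspace of $V$; applied to $V_0$ and $V_1$, it descends to a continuous projector on $S = V_1/V_0$ yielding the identification $S(\Delta,\la) \cong V_1(\Delta,\la)/V_0(\Delta,\la)$. The remaining admissibility axioms for $S$ transfer routinely: the resolvent preserves $V_0$ and $V_1$ by axiom (a) of the admissibility definition and hence descends to $S$; finite-dimensionality and discreteness of the spectrum are inherited from $V$; the spectral expansion and the continuity of projections descend by the open mapping theorem. For the final clause, if $\dim V_1(\Delta,\la)/V_0(\Delta,\la) = \dim V(\Delta,\la)$ for every $\la$, then the chain $V_0(\Delta,\la) \subset V_1(\Delta,\la) \subset V(\Delta,\la)$ forces equality throughout; the density of the spectral expansion (axiom (d) of admissibility) then gives $V_0 = 0$ and $V_1 = V$.

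\textbf{Part (b).} Given a $\pi$-filtration $F_1' \subset F_1 \subset \cdots \subset F_l' \subset F_l$, part (a) shows that $\pi \cong F_j/F_j'$ is $\Delta$-admissible. Pick any $\sigma \in \Spec_\pi(\Delta)$ and set $d = \dim \pi(\Delta,\sigma) > 0$. Iterating the formula $S(\Delta,\sigma) \cong V_1(\Delta,\sigma)/V_0(\Delta,\sigma)$ along the filtration gives $\dim V(\Delta,\sigma) \ge l \cdot d$, so $l$ is bounded by the (finite) dimension of $V(\Delta,\sigma)$. Any two maximal $\pi$-filtrations have equal length by a Jordan--Hölder argument: they can be interleaved into a common refinement whose $\pi$-subquotients are counted the same way from either side.

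\textbf{Parts (c) and (d).} For (d), I would apply Zorn's lemma to the poset of $L$-filtrations $(F_i)_{i \in L}$ satisfying the completeness axioms, ordered by refinement. Chains have upper bounds: take the union of the index towers and adjoin limit indices at which one places the closure of the union (respectively the intersection) as prescribed by axioms (d)--(e) of completeness. A maximal element exists; its neighbored quotients must be irreducible, for any nontrivial closed $G$-stable subspace of $F_j/F_i$ would pull back to a strict refinement of the filtration. For (c), pick a complete filtration from (d): each neighbored pair $F_i \subset F_j$ has irreducible quotient $\pi_{ij}$, and the trace class hypothesis on $R(f)$ gives
$$
\tr R(f) = \sum_{i<j \text{ neighbored}} \tr \pi_{ij}(f)
$$
by additivity of the trace along short exact sequences. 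Grouping terms by the isomorphism class of $\pi_{ij}$ yields the desired sum, where the count of $\pi_{ij} \cong \pi$ equals $N_{\Ga,\om}(\pi)$ by the maximality of the filtration and part (b).

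\textbf{Main obstacle.} The chief difficulty lies in (d). Since towers may be order-isomorphic to $\Z$ or $\pm\N$, one must verify the limit axioms (c)--(e) carefully against the Zorn-type construction, which is not automatic. Moreover, because $\Delta$ need not be central in $U(\g)$, the finite-dimensional spectral spaces $V(\Delta,\sigma)$ are typically not $G$-stable, so the filtration cannot be read off the spectral decomposition directly; one must instead exploit the finite-dimensionality of each $V(\Delta,\sigma)$ to show that any chain of $G$-stable subspaces, intersected with a fixed generalized eigenspace, stabilizes, ruling out pathological infinite refinements. A secondary issue in (c) is justifying, in the Banach setting, that the trace class property descends to sub-quotients with additive and absolutely convergent traces; this relies on admissibility to reduce the global trace to the contributions of the irreducible quotients $\pi_{ij}$.
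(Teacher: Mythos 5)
Your part (a) is essentially sound, and in fact cleaner than the paper's own argument: the paper proves surjectivity of $V_1(\Delta,\la)/V_0(\Delta,\la)\to S(\Delta,\la)$ by a resolvent limit $\xi\to\la_0$, whereas your route through the preceding lemma (spectral components of an element of a closed $G$-stable subspace stay in that subspace) works, provided you supply the one missing step: for $v+V_0\in S(\Delta,\la)$ one gets $(\Delta-\la)^n v_\sigma\in V_0(\Delta,\sigma)$ for each $\sigma$, and since $\Delta-\la$ is invertible on the finite-dimensional space $V(\Delta,\sigma)$ for $\sigma\ne\la$ and preserves the subspace $V_0(\Delta,\sigma)$, each such $v_\sigma$ lies in $V_0$; absent this, the claim that the descended projector has image exactly $S(\Delta,\la)$ is circular, as it presupposes uniqueness of the spectral expansion in $S$.

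The genuine gaps are in (d) and (b). Your Zorn argument for (d) does not deliver the theorem: an upper bound of a chain of refinements need not be a tower (an index can lose both neighbors in the limit), a maximal element need not satisfy the density and zero-intersection axioms, and, most seriously, concluding that neighbored quotients of a maximal filtration are irreducible presupposes that every nonzero closed $G$-stable subquotient contains an irreducible one --- which is exactly the hard point and is false for general Banach representations. The paper instead constructs the filtration spectral value by spectral value: for $\la\in\Spec_R(\Delta)$ it takes $V_0$ maximal $G$-stable with $V_0\cap V(\Delta,\la)=0$, then $V_1$ minimal among the closures of $V_0+R(G)v$ for $v\in V(\Delta,\la)\sm\{0\}$, so that $V_1/V_0$ is irreducible, and iterates over the finite-dimensional eigenspace and then over the countably many spectral values. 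Your ``main obstacle'' paragraph correctly names the finite-dimensionality of $V(\Delta,\sigma)$ as the needed input, but the proof never actually uses it. In (b), the bound $l\le\dim V(\Delta,\sigma)/d$ is correct, but the Schreier-type ``common refinement'' argument is problematic in the Banach setting, where sums of closed subspaces need not be closed and the second isomorphism theorem fails; the paper avoids this by comparing any maximal $\pi$-filtration against one fixed complete filtration from (d) and intersecting with $V(\Delta,\la)$ to reduce to the classical finite Jordan--H\"older theorem. Finally, in (c) the asserted ``additivity of the trace along short exact sequences'' over an infinite filtration is not something you can cite; it must be extracted from the $\Delta$-spectral decomposition of $R(f)$.
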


\begin{proof}
(a) A submodule is admissible, so it remains to show that a quotient is admissible.
So let $U\subset V$ be a closed $G$-stable subspace. We claim that for $\la_0\in\C$ the map $V(\Delta,\la_0)\to V/U$ induces an isomorphism $ V(\Delta,\la_0)/U(\Delta,\la_0)\cong (V/U)(\Delta,\la_0)$.
The injectivity is clear.
For the surjectivity let $v+U$ be in $(V/U)(\Delta,\la_0)$, then $(\Delta-\la_0)^nv\in U$ for some $n$.
Write $v=\sum_{\la\in\C}v_\la$ as in the definition of admissibility.
We claim that $v-v_{\la_0}$ lies in $U$.
Let $\xi\in\C\sm\Spec_R(\Delta)$.
Write $(\Delta-\la_0)^{n}v=\sum_\la w_\la\in U$,
then each $w_\la$ lies in $U$ and 
$$
(\Delta-\xi)^{-n}\underbrace{(\Delta-{\la_0})^nv}_{\in U}=(\Delta-\xi)^{-n}\sum_\la w_\la
=\sum_{\la}(\Delta-\xi)^{-n}w_\la\in U.
$$
which implies $(\Delta-\xi)^{-n}w_\la\in U$ the uniqueness of the $\la$-expansion.

For $\la\ne \la_0$ we let $\xi$ tend to $\la_0$ and find $(\Delta-\la_0)^{-n}w_\la\in U$.
Next let $(\Delta-\xi)^nv=\sum_\la w_\la^\xi$ and note that $w_\la^\xi$ depends continuously on $\xi$. 
As
$$
v=(\Delta-\xi)^{-n}(\Delta-\xi)^nv=\sum_\la(\Delta-\xi)^{-n}w_\la^\xi,
$$
we deduce $v_\la=(\Delta-\xi)^{-n}w_\la^\xi$ by uniqueness.
For $\la\ne\la_0$ we let $\xi$ tend to $\la_0$ and we can deduce $v_\la=(\Delta-\la_0)^{-n}w_\la\in U$.
This implies $v-v_{\la_0}\in U$ as claimed. The rest of part (a) is clear.

For (b),(c) and (d) we argue that for an admissible representation the property (d) implies (b) and (c).
To see that (d) implies (b) we consider a maximal $\pi$-filtration
$$
F_1'\subset F_1\subset F_2'\subset F_2\subset\dots\subset F_l'\subset F_l
$$
and 
a complete $G$-stable $L$-filtration $(S_j)_{j\in L}$ with irreducible quotients.
We claim that there must exist indices $\nu_1<\nu_1'<\dots<\nu_l<\nu_l'$ in $L$ such that  $S_{\nu_i}/S_{\nu_i'}\cong\pi$ and $S_{\nu_i'}/S_{\nu_{i-1}}$ has no $\pi$-sub-quotient, so that $l$ equals the number of $\pi$-sub-quotients within the given $L$-filtration and this independent of the chosen maximal $\pi$-filtration.
If the $L$-filtration is finite, this is the classical Jordan-Hölder Theorem.
We reduce the present case to a finite filtration as follows: We choose a $\la\in\Spec_\pi(\Delta)$.
Then $(S_j^\la=S_j\cap V(\Delta,\la))_j$ is a filtration of this finite dimensional space.
There must exist two neighboring indices $i_1<j_1$ such that $S_{i_1}^\la=0$ and $S_{j_1}^\la\ne 0$.
Repeating we find indices $i_1<j_1<i_2<j_2<\dots<i_k<j_k$ such that $i_\nu$ and $j_\nu$ are neighbored for each $\nu$ and $S_{j_\nu}^\la=S_{i_{\nu+1}}$ always holds, which implies that $S_{i_{n+1}}/S_{j_\nu}$ has no $\pi$-sub-quotient.
Further $S_{i_1}$ and $V/S_{j_k}$ both have no $\pi$-sub-quotient.
Now one can ignore the $\nu$ with $S_{j_\nu}/S_{i_\nu}\ncong \pi$ and assume that all quotients are $\cong\pi$. From here on the classical proof of the Jordan-Hölder Theorem applies to show that $k=l$.
After that, once we know that $N_{\Ga,\om}(\pi)$ equals the number of $\pi$-sub-quotients in the given $L$-filtration, part (c) also follows.

So it remains to show (d).
Let $\la\in\Spec_R(\Delta)$. By Zorn's lemma there exists a maximal $G$-stable subspace $V_0$ such that $V_0\cap V(\Delta,\la)=0$.
Then its closure $\ol V_0$ is admissible and thus satisfies the same claim, i.e., $\ol V_0\cap V(\Delta,\la)=0$, so by maximality, $V_0$ is closed.
Let $v\in V(\Delta,\la)$ and let $S(v)$ denote the closure of the span of $V_0+R(G)v$.
Among all spaces $S(v)$ as $v$ varies in $V(\Delta,\la)\sm\{ 0\}$, there is a minimal one $V_1$.
Then $V_1/V_0$ is irreducible.
\end{proof}

\section{The spectral theorem}
Let $G$ be a locally compact group and let $\Ga\subset G$ be a cocompact lattice.
This means that $\Ga$ is a discrete subgroup such that the quotient $\Ga\bs G$ is compact.
Let $\om:\Ga\to\GL(V)$ be a group homomorphism, where $V=V_\om$ is a finite-dimensional complex vector space.
Let $E=E_\om=\Ga\bs (G\times V_\om)$, where $\Ga$ acts diagonally.
The projection onto the first factor makes $E$ a  vector bundle over $\Ga\bs G$.
The space $\Ga(E)$ of continuous sections can be identified with the space $C(\Ga\bs G,\om)$  of all continuous functions $f:G\to V_\om$ such that $f(\ga x)=\om(\ga)f(x)$ for all $\ga\in\Ga$.
Choose a hermitian metric on $E$ to define the space $L^2(E)$ of $L^2$-sections.
This space can be identified with the space $L^2(\Ga\bs G,\om)$ of all measurable functions $f:G\to V_\om$ with $f(\ga x)=\om(\ga)f(x)$ and $\int_F\sp{f(x),f(x)}_x\,dx<\infty$, where $F\subset G$ is a compact fundamental domain for $\Ga\bs G$.
The group $G$ acts by right translations on the Hilbert space $L^2(\Ga\bs G,\om)$.
This representation is continuous but in general not unitary.
Let $R=R_{\Ga,\om}$ denote the right regular representation of $G$ on the Hilbert space $H=L^2(\Ga\bs G,\om)$.

\begin{theorem}
Let $G$ be a Lie group and $\Ga\subset G$ a cocompact lattice.
Fix a group-Laplacian $\Delta$.
Then the representation $(R,V)$ with $V=L^2(\Ga\bs G,\om)$ is $\Delta$-admissible.
In particular, there exists a complete filtration for $(R,V)$, the multiplicities $m(\pi)$ of which are finite and given by the maximal lengths of $\pi$-filtrations.
\end{theorem}

\begin{proof}
The element $\Delta\in U(\g)$ acts on $C^\infty(\Ga\bs G,\om)$ as a differential operator of order two whose principal symbol equals the square of the norm given by the Riemannian metric, such operators are called \e{generalized Laplacians} in \cite{BGV}.
By \cite{Shubin}*{Theorems 8.4 and 9.3} and \cite{Markus}*{Theorem4.3} it follows that $\Delta$ has discrete spectrum in $L^2(\Ga\bs G,\om)$, i.e., there exists a sequence $\la_j$ of complex numbers which do not accumulate in $\C$ such that the space $\bigoplus_{j=1}^\infty H(\Delta,\la_j)$ is dense in $H=L^2(\Ga\bs G,\om)$.
Each $v\in H$ can uniquely be written as convergent $\sum_j u_j$ with $u_j\in H(\Delta,\la_j)$.

One sets $\La_R$ equal to $\C\sm\{\la_j:j\in\N\}$.
Then for given $\la\in\La_R$ the space $H(\Delta,\la)$ which lies in  $C^\infty(\Ga\bs G,\om)$, is finite-dimensional.
The only tricky point is to show that for a given closed $G$-stable subspace $U\subset H$ one has $(\Delta-\xi)^{-1}U\subset U$ for $\xi\in\Lambda_R$.
For this note that $(\Delta-\xi)^{-1}=f(\sqrt\Delta)$ with $f(x)=(x^2-\xi)^{-1}$.
The Fourier transform of $f$ is $\hat f(x)=\frac{e^{i|x|\sqrt\xi}}{2\sqrt\xi}$, where $\sqrt\xi$ denote the unique complex number $\al$ with $\Im(\al)>0$ and $\al^2=\xi$.
Let $\chi_0$ be a smooth function on $\R$ with $0\le\chi\le 1$, $\chi(t)=1$ for $t\le 0$ and $\chi_1(t)=0$ for $t\ge 1$.
For $T>0$ set $\chi_T(t)=\chi_0(t-T)$ and let $f_T$ be defined by $\hat f_T(x)=\chi_T(|x|)\hat f(x)$.
Then $\hat f_T(x)$ has compact support and by \cite{CGT} it follows that the operator $f_T(\sqrt\Delta)$ has finite propagation speed. We can view this operator of $G$ or on $\Ga\bs G$.
The connection between the two is as follows: On $G$ this operator is invariant under left translation by elements of $G$, hence it is given by right convolution with a function, which, by finite propagation speed, has compact support.
This function is continuous on $G$ and smooth on the set $G\sm\{ 1\}$.
We denote it by $x\mapsto f_T(\sqrt\Delta)(x)$.
Then on $\Ga\bs G$ the operator $f_T(\sqrt\Delta)$ has continuous kernel $k(x,y)=\sum_{\ga\in\Ga}f_T(\sqrt\Delta)(x^{-1}\ga y)$, the sum being locally finite.
For $\phi\in L^2(E)$ one has $R(f_T(\sqrt\Delta))\phi(x)=\int_Gf_T(\sqrt\Delta)(y)\phi(xy)\,dy$ and approximating this integral by Riemann sums, one sees that $R(f_T(\sqrt\Delta))\phi$ lies in $U$ if $\phi\in U$.
It therefore suffices to show that $R(f_T(\sqrt\Delta))\phi$ converges to $R(\Delta-\xi)^{-1}\phi$ as $T\to\infty$.
On the compact manifold $\Ga\bs G$ this follows if we show that the kernel of the former converges uniformly to the kernel of the latter, which is a consequence of Theorem 1.4 of \cite{CGT}.
\end{proof}

\begin{theorem}[Trace formula]
Let $G$ be a locally compact group and let $\Ga\subset G$ be a cocompact lattice.
Let $(\om,V_\om)$ be a representation of the discrete group $\Ga$ on a finite-dimensional complex vector space $V_\om$ and define the Hilbert space $H=L^2(\Ga\bs G,\om)$ as above.
Then for each $f\in C^\infty_c(G)$ the operator $R(f)$ is trace class and its trace equals either side of the equation
$$
\sum_{\pi\in\widetilde G}N_{\Ga,\om}(\pi)\tr\pi(f)=\sum_{[\ga]}\vol(\Ga_\ga\bs G_\ga)\CO_\ga(f)\tr\om(\ga),
$$
where $N_{\Ga,\om}(\pi)$ denotes the maximal length of a $\pi$-filtration in $H$, the sum on the right runs over all conjugacy classes $[\ga]$ in $\Ga$, the groups $G_\ga$ and $\Ga_\ga$ are the centralizers of $\ga$ in $G$ and $\Ga$ and $\CO_\ga$ denotes the \e{orbital integral}
$$
\CO_\ga(f)=\int_{G_\ga\bs G}f(x^{-1}\ga x)\,dx.
$$
The left hand side of the formula is also called the \e{spectral side} and the right hand side is the \e{geometric side}.
\end{theorem}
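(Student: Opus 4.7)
My plan is to realise $R(f)$ as an integral operator with an explicit kernel, show that this operator is trace class, and compute its trace in two ways: geometrically, as the integral of the kernel on the diagonal, and spectrally, via the Jordan--H\"older machinery of Theorem~\ref{thm1.6}.

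First I unfold against a measurable fundamental domain, using the equivariance $\phi(\ga y)=\om(\ga)\phi(y)$, to get
$$
R(f)\phi(x)=\int_Gf(x^{-1}y)\phi(y)\,dy
=\int_{\Ga\bs G}K(x,y)\phi(y)\,dy,\qquad
K(x,y)=\sum_{\ga\in\Ga}f(x^{-1}\ga y)\,\om(\ga),
$$
the sum being locally finite because $f$ has compact support and $\Ga$ is discrete. To see that $R(f)$ is trace class, I factor $f=f_1*f_2$ with $f_1,f_2\in C_c^\infty(G)$ (Dixmier--Malliavin in the Lie case, and more generally via the inductive description of Definition 2.2 together with smooth bump functions along local Lie quotients). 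Each $R(f_i)$ then has a continuous kernel on the compact space $\Ga\bs G\times\Ga\bs G$ and is therefore Hilbert--Schmidt, so $R(f)=R(f_1)R(f_2)$ is trace class with
$$
\tr R(f)=\int_{\Ga\bs G}\tr K(x,x)\,dx.
$$

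For the geometric side I group the inner sum by $\Ga$-conjugacy classes and unfold over the centralizer $\Ga_\ga$:
$$
\int_{\Ga\bs G}\sum_{\ga\in\Ga}f(x^{-1}\ga x)\tr\om(\ga)\,dx
=\sum_{[\ga]}\tr\om(\ga)\int_{\Ga_\ga\bs G}f(x^{-1}\ga x)\,dx.
$$
Splitting $\Ga_\ga\bs G\to G_\ga\bs G$ with fibre $\Ga_\ga\bs G_\ga$, and using that $x\mapsto f(x^{-1}\ga x)$ is left $G_\ga$-invariant, isolates the volume factor $\vol(\Ga_\ga\bs G_\ga)$ together with the orbital integral $\CO_\ga(f)$, yielding the right-hand side. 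For the spectral side in the case where $G$ is a Lie group, Lemma 2.1 shows that $(R,H)$ is $\Delta$-admissible, and Theorem~\ref{thm1.6}(c) then produces
$$
\tr R(f)=\sum_{\pi\in\widetilde G}N_{\Ga,\om}(\pi)\tr\pi(f).
$$

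To extend to an arbitrary locally compact $G$, I invoke the structure underlying Definition 2.2: there is an open subgroup $H\subset G$ which is a projective limit of connected Lie groups, the support of $f$ meets only finitely many cosets $gH$, and on each such coset $f$ factors through a Lie quotient $H/N$. Since $H$ is open, $\Ga\bs G$ is a finite disjoint union of closed-open pieces of the form $\Ga'\bs H$ for cocompact lattices $\Ga'\subset H$, so $R(f)$ is a finite sum of operators each reducing to the Lie case already handled, and the formula assembles back. The main obstacle is precisely this assembly on the spectral side: verifying that the intrinsically defined multiplicities $N_{\Ga,\om}(\pi)$ in $\widetilde G$ are recovered from the admissibility-based multiplicities on the Lie-quotient pieces, and that no $\pi$ with $N_{\Ga,\om}(\pi)>0$ and $\tr\pi(f)\neq 0$ is overlooked because $\pi$ fails to factor through the Lie quotients present in the support of $f$.
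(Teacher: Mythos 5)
Your proposal follows essentially the same route as the paper: Dixmier--Malliavin factorization $f=f_1*f_2$, realization of $R(f)$ as an integral operator with kernel $\sum_{\ga}f(x^{-1}\ga y)\om(\ga)$, trace class via the Hilbert--Schmidt factorization, the geometric side by unfolding over conjugacy classes, the spectral side from Theorem~\ref{thm1.6}(c) via $\Delta$-admissibility, and the reduction of the general locally compact case to Lie quotients of an open subgroup. The assembly issue you flag at the end (matching the intrinsic multiplicities in $\widetilde G$ with those coming from the Lie-quotient pieces) is real, but the paper's own proof passes over it just as briefly, so your account is faithful to the argument actually given.
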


\begin{proof}
First assume that $G$ is a Lie group.
By the Theorem of Dixmier and Malliavin \cite{DM}, every $f\in C_c^\infty(G)$ is a finite sum of convolution products $g*h$ with $g,h\in C_c^\infty(G)$.
If $f=g*h$ then $R(f)=R(g)R(h)$.
Now the same calculus as in the unitary case \cite{HA2}*{Chapter 9} implies that $R(f)$ is an integral operator with smooth kernel $k(x,y)=\sum_{\ga\in\Ga}f(x^{-1}\ga y)\om(\ga)$, so by \cite{HA2}*{Proposition 9.3.1} it is trace class and its trace equals 
$\int_{\Ga\bs G}\tr k(x,x)\,dx$, which with the same computation as in the proof of \cite{HA2}*{Theorem 9.3.2} is seen to be equal to 
$$
\sum_{[\ga]}\vol(\Ga_\ga\bs G_\ga)\CO_\ga(f)\tr\om(\ga).
$$
We this get the geometric side of the trace formula.
The spectral side is obtained from Theorem \ref{thm1.6}.

To finish the proof, we generalize the trace formula to arbitrary locally compact groups.
So assume now that $G$ is the projective limit of its Lie quotients,
$$
G=\lim_{\substack{\leftarrow\\ N}}G/N.
$$
A given $f\in C_c^\infty(G)$ will factorize over some Lie quotient $G/N$.
We can assume the compact group $N$ chosen so small that $N\cap\Ga=\{1\}$.
Then $\Ga$ induces a cocompact lattice in $G/N$ and the trace formula for this group implies the trace formula for the given $f$.

Finally, assume that trace formula holds for an open subgroup $H$ of $G$, then $\Ga\cap H$ is a cocompact lattice in $H$ and the trace formula for  $H$ implies the trace formula for $G$.
\end{proof}

\section{Semisimple Lie groups}
In the case of a semisimple group $G$ we here prove a slightly stronger spectral theorem which says that the right regular representation on $L^2(E)$ is a direct sum of representations of finite length.

\begin{definition}
A representation $(R,V)$ of a locally compact group has \e{finite length}, if there exists a filtration
$$
0=F_0\subset\dots\subset F_k=V
$$
of closed $G$-stable subspaces such that $F_j/F_{j-1}$ is irreducible for each $j$.
The classical Jordan-Hölder Theorem says that then the irreducible quotients $F_j/F_{j-1}$ are uniquely determined by $V$ up to order.
\end{definition}

\begin{definition}
We say that a representation $(R,V)$ of a locally compact group $G$ is a \e{Jordan-Hölder representation}, if it is a direct sum of finite length representations.
More precisely, we insist that there are closed $G$-stable subspaces $V_i$, $i\in I$ such that the direct sum $\bigoplus_{i\in I}V_i$ is dense in $V$.
\end{definition}

Let $G$ be a semisimple Lie group with finite  center and let $K$ be a maximal compact subgroup. 
Let $\Ga\subset G$ be a cocompact lattice and let $(\chi,V_\chi)$ be a finite dimensional complex representation of $\Ga$.
Then $\chi$ defines a vector bundle $E=E_\chi$ over $\Ga\bs G$.
The smooth sections can be described as
$$
\Ga^\infty(E)\cong (C^\infty(G)\otimes V_\chi)^\Ga.
$$
The choice of a hermitian metric on $E$ allows the definition of the Hilbert space $L^2(E)$ of square integrable sections.
We equip $\Ga^\infty(E)$ with the topology of $L^2(E)$.

Let $V_\fin$ be the space of all sections in $\Ga^\infty(E)$ which are $K$-finite as well as $\z$-finite, where $\z$ is the center of the universal covering algebra $U(\g_\C)$ of the complexified Lie algebra $\g_\C$ of $G$.

\begin{theorem}
The $(\g,K)$-module $V_\fin$ is dense in $\Ga^\infty(E)$ as well as in $L^2(E)$.
The $G$-representations on $\Ga^\infty(E)$ and on $L^2(E)$ are Jordan-Hölder representations.
\end{theorem}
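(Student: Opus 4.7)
The plan is to combine the $\Delta$-admissibility of $L^2(E)$ established in Section 2 with Harish-Chandra's theory of admissible $(\g,K)$-modules. Fix a Cartan decomposition $\g=\mathfrak{k}\oplus\mathfrak{p}$ with Cartan involution $\theta$, and take the left-invariant metric on $G$ coming from the positive definite inner product $-B(\,\cdot\,,\theta\,\cdot\,)$ on $\g$, where $B$ is the Killing form. A computation in an orthonormal basis adapted to the splitting shows that the associated group-Laplacian is
$$
\Delta=2C_K-\Om,
$$
where $\Om$ is the Casimir of $G$ and $C_K$ the Casimir of $K$. In particular $\Delta$ is $K$-bi-invariant and commutes with $\z=Z(U(\g))$.

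Density of $V_\fin$ is then immediate: by the lemma of Section 2 the representation $L^2(E)$ is $\Delta$-admissible, so by Definition \ref{def1.2}(d) the sum $\bigoplus_\la V(\Delta,\la)$ is $L^2$-dense, and since $\Delta$ commutes with $K$ and with $\z$, each finite dimensional space $V(\Delta,\la)\subset\Ga^\infty(E)$ is $K$-stable and $\z$-stable, hence $K$-finite and $\z$-finite. Thus $V(\Delta,\la)\subset V_\fin$, and density in $\Ga^\infty(E)$ with the induced $L^2$-topology follows at once.

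For the Jordan-H\"older statement, decompose the algebraic direct sum $V_\fin=\bigoplus_\theta V_\fin[\theta]$ by generalized infinitesimal character $\theta:\z\to\C$, and set $V_\theta:=\ol{V_\fin[\theta]}\subset L^2(E)$. Since $\z$ is central, $V_\fin[\theta]$ is $(\g,K)$-stable, so $V_\theta$ is a closed $G$-stable subspace. The key admissibility estimate is
$$
V_\theta(\tau)\subset V(\Delta,2c(\tau)-\theta(\Om))
$$
for every $K$-type $\tau$, where $c(\tau)$ denotes the eigenvalue of $C_K$ on $\tau$. Indeed, writing $v\in V_\theta(\tau)$ with its $\Delta$-spectral decomposition $v=\sum_\la v_\la$, the continuous $\Delta$-projections commute with both $K$ and $\z$, so each $v_\la$ lies in the finite dimensional space $V(\Delta,\la)\cap V_\fin[\theta]$ and is $\tau$-isotypic; the identity $\Delta=2C_K-\Om$ then forces $\la=2c(\tau)-\theta(\Om)$ or $v_\la=0$. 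Hence $V_\theta(\tau)$ is finite dimensional, so $V_\theta$ is admissible in the Harish-Chandra sense. Since only finitely many irreducible $(\g,K)$-modules have a given infinitesimal character $\theta$, each contributing finite multiplicity, Harish-Chandra's finiteness theorem yields that $V_\theta$ has finite length.

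Directness of the sum follows from the same $\Delta$-projection argument: if $\sum_i v_{\theta_i}=0$ is a finite sum with distinct $\theta_i$ and $v_{\theta_i}\in V_{\theta_i}$, then $\sum_i (v_{\theta_i})_\la=0$ in the finite dimensional space $V(\Delta,\la)$ for every $\la$, and linear independence of the generalized $\z$-eigenspaces there forces each summand to vanish. Since $V_\fin\subset\bigoplus_\theta V_\theta$ is dense in $L^2(E)$, this exhibits the Jordan-H\"older decomposition of $L^2(E)$; intersecting each $V_\theta$ with $\Ga^\infty(E)$ yields the analogous decomposition of the smooth sections. The main obstacle is the admissibility of the closed subspaces $V_\theta$: one must carry the $\Delta$-spectral structure through the closure $\ol{V_\fin[\theta]}$ in a non-unitary setting where the $V_\theta$ need not be mutually orthogonal, and it is precisely the uniqueness statement in Definition \ref{def1.2}(d) that saves this step.
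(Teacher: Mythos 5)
Your proof is correct in substance but follows a genuinely different route from the paper's. The paper never uses the $\Delta$-admissibility lemma of Section~2 here: it works $K$-isotype by $K$-isotype, identifies $\Hom_K(V_\tau,\Ga^\infty(E))$ with sections of a bundle $E_{\chi,\tau}$ over the locally symmetric space $\Ga\bs G/K$, quotes Shubin to get discrete Casimir spectrum with finite multiplicities there, and then takes the generated module $U(\g)V_{\tau,\la}$, which is finitely generated and admissible (Wallach, Cor.~3.4.7), hence a Harish-Chandra module with a finite composition series (Knapp, Cor.~10.42); the Jordan--H\"older decomposition is obtained by exhausting over $K$-types and eigenvalues. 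You instead fix the Killing-form metric so that $\Delta=2C_K-\Om$, recycle the $\Delta$-admissibility of $L^2(E)$ from Section~2 to get density of $V_\fin$ essentially for free, decompose $V_\fin$ by generalized infinitesimal character, and deduce finite length of each $V_\theta$ from the bound $V_\theta(\tau)\subset V(\Delta,2c(\tau)-\theta(\Om))$ together with Harish-Chandra's finiteness of the set of irreducibles with a fixed infinitesimal character. Your decomposition has the advantage of being canonical and manifestly direct (the paper's exhaustion procedure leaves the directness of the resulting sum somewhat implicit), at the cost of tying the argument to one particular group-Laplacian. Two points you should make explicit, though neither is fatal and the paper is equally terse about the first: (i) the passage from the $(\g,K)$-stability of $V_\fin[\theta]$ to the $G$-stability of its closure, and more generally the matching of $(\g,K)$-submodules with closed $G$-stable subspaces needed to convert finite length of the Harish-Chandra module into finite length of the Banach representation $V_\theta$ --- this uses that vectors in $V_\fin$ are analytic, which follows from analytic elliptic regularity since each such vector is annihilated by a polynomial in the elliptic operator $\Delta=2C_K-\Om$; and (ii) the commutation of the $\Delta$-spectral projections with $\z$, which only makes sense on smooth vectors and should be justified, e.g.\ via the resolvent $(\Delta-\xi)^{-1}$ preserving $V^\infty$ and commuting with $\z$ there.
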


\begin{proof}
For every $(\tau,V_\tau)\in\hat K$ the Casimir element $C\in \z$ acts on the $\tau$-isotype 
$$
\Ga^\infty(E)(\tau)\cong V_\tau\otimes \Hom_K(V_\tau,\Ga^\infty(E)),
$$
as it acts on 
\begin{eqnarray*}
\Hom_K(V_\tau,\Ga^\infty(E)) &\cong&
\(\Ga^\infty(E)\otimes V_\tau\)^K\\
&\cong& \( C^\infty(G)\otimes V_\chi\otimes V_\tau\)^{\Ga\times K}\\
&\cong& \Ga^\infty(E_{\chi,\tau}),
\end{eqnarray*}
where $E_{\chi,\tau}$ is the vector bundle over $\Ga\bs G/K$ defined by $\chi\times\tau$.
On $\Ga^\infty(E_{\chi,\tau})$ the Casimir $C$ induces an operator which has the same principal symbol as the Laplacian for any given metric.
Hence (\cite{Shubin}, Theorems 8.4 and 9.3) the operator $C$ has discrete spectrum on $L^2(E_{\chi,\tau})$ consisting of eigenvalues of finite multiplicity.

Let $\la\in\C$ be an eigenvalue and let $\Ga^\infty(E_{\chi,\tau})(\la)$ be the corresponding finite dimensional generalized eigenspace.
The image $V_{\tau,\la}$ of $\Ga^\infty(E_{\chi,\tau})(\la)$ in $\Ga^\infty(E)$ is $\z$-stable and $K$-stable.
Hence the generated $(\g,K)$-module $U(\g)V_{\tau,\la}$ 
is in $V_\fin$ and by Corollary 3.4.7 of \cite{Wall} 
it is admissible and as it is finitely generated, it 
is a Harish-Chandra module, so by Corollary 10.42 of 
\cite{Knapp} it has a finite composition series:
$$
U(\g)V_{\tau,\la}= F_k\supset F_{k-1}\supset\dots\supset F_0=0
$$
with irreducible quotients $F_{j+1}/F_j$.
We repeat this argument with a different $K$-type $\tau'$ not occurring in $U(\g)V_{\tau,\la}$ if it exists. Otherwise, we repeat it with a different eigenvalue $\la$ to get the claim.
\end{proof}

\begin{bibdiv} \begin{biblist}

\bib{BGV}{book}{
   author={Berline, Nicole},
   author={Getzler, Ezra},
   author={Vergne, Mich{\`e}le},
   title={Heat kernels and Dirac operators},
   series={Grundlehren Text Editions},
   note={Corrected reprint of the 1992 original},
   publisher={Springer-Verlag, Berlin},
   date={2004},
   pages={x+363},
   isbn={3-540-20062-2},
}

\bib{Bruhat}{article}{
   author={Bruhat, Fran\c{c}ois},
   title={Distributions sur un groupe localement compact et applications \`a
   l'\'etude des repr\'esentations des groupes $\wp $-adiques},
   language={French},
   journal={Bull. Soc. Math. France},
   volume={89},
   date={1961},
   pages={43--75},
   issn={0037-9484},
}

\bib{CGT}{article}{
   author={Cheeger, Jeff},
   author={Gromov, Mikhail},
   author={Taylor, Michael},
   title={Finite propagation speed, kernel estimates for functions of the
   Laplace operator, and the geometry of complete Riemannian manifolds},
   journal={J. Differential Geom.},
   volume={17},
   date={1982},
   number={1},
   pages={15--53},
   issn={0022-040X},
}

\bib{HA2}{book}{
   author={Deitmar, Anton},
   author={Echterhoff, Siegfried},
   title={Principles of harmonic analysis},
   series={Universitext},
   publisher={Springer},
   place={New York},
   date={2009},
   pages={xvi+333},
   isbn={978-0-387-85468-7},
   review={\MR{2457798 (2010g:43001)}},
}

\bib{DvD}{article}{
   author={Deitmar, Anton},
   author={van Dijk, Gerrit},
   title={Trace class groups},
   journal={J. Lie Theory},
   volume={26},
   date={2016},
   number={1},
   pages={269--291},
   issn={0949-5932},
}

\bib{DeMo}{article}{
   author={Deitmar, Anton},
   author={Monheim, Frank},
   title={A trace formula for non-unitary representations of a uniform
   lattice},
   journal={Math. Z.},
   volume={284},
   date={2016},
   number={3-4},
   pages={1199--1210},
   issn={0025-5874},
   doi={10.1007/s00209-016-1695-9},
}

\bib{DeMo2}{article}{
   author={Deitmar, Anton},
   author={Monheim, Frank},
   title={Eisenstein series with non-unitary twists},
   eprint={https://arxiv.org/abs/1612.06993},
   date={2016},
}

\bib{DM}{article}{
   author={Dixmier, Jacques},
   author={Malliavin, Paul},
   title={Factorisations de fonctions et de vecteurs ind\'efiniment
   diff\'erentiables},
   language={French, with English summary},
   journal={Bull. Sci. Math. (2)},
   volume={102},
   date={1978},
   number={4},
   pages={307--330},
   issn={0007-4497},
}

\bib{Gloeck}{article}{
   author={Gl\"ockner, Helge},
   title={Discontinuous non-linear mappings on locally convex direct limits},
   journal={Publ. Math. Debrecen},
   volume={68},
   date={2006},
   number={1-2},
   pages={1--13},
   issn={0033-3883},
}

\bib{Knapp}{book}{
   author={Knapp, Anthony W.},
   title={Representation theory of semisimple groups},
   series={Princeton Landmarks in Mathematics},
   note={An overview based on examples;
   Reprint of the 1986 original},
   publisher={Princeton University Press},
   place={Princeton, NJ},
   date={2001},
   pages={xx+773},
   isbn={0-691-09089-0},
}

\bib{Markus}{book}{
   author={Markus, A. S.},
   title={Introduction to the spectral theory of polynomial operator
   pencils},
   series={Translations of Mathematical Monographs},
   volume={71},
   note={Translated from the Russian by H. H. McFaden;
   Translation edited by Ben Silver;
   With an appendix by M. V. Keldysh},
   publisher={American Mathematical Society},
   place={Providence, RI},
   date={1988},
   pages={iv+250},
   isbn={0-8218-4523-3},
}

\bib{MZ}{book}{
   author={Montgomery, Deane},
   author={Zippin, Leo},
   title={Topological transformation groups},
   publisher={Interscience Publishers, New York-London},
   date={1955},
   pages={xi+282},
}

\bib{mueller}{article}{
   author={M{\"u}ller, Werner},
   title={A Selberg trace formula for non-unitary twists},
   journal={Int. Math. Res. Not. IMRN},
   date={2011},
   number={9},
   pages={2068--2109},
   issn={1073-7928},
   doi={10.1093/imrn/rnq146},
}

\bib{Neeb}{article}{
   author={Neeb, Karl-Hermann},
   title={On differentiable vectors for representations of infinite
   dimensional Lie groups},
   journal={J. Funct. Anal.},
   volume={259},
   date={2010},
   number={11},
   pages={2814--2855},
   issn={0022-1236},
   doi={10.1016/j.jfa.2010.07.020},
}

\bib{Reiner}{book}{
   author={Reiner, I.},
   title={Maximal orders},
   series={London Mathematical Society Monographs. New Series},
   volume={28},
   note={Corrected reprint of the 1975 original;
   With a foreword by M. J.\ Taylor},
   publisher={The Clarendon Press, Oxford University Press, Oxford},
   date={2003},
   pages={xiv+395},
   isbn={0-19-852673-3},
}

\bib{Schaef}{book}{
   author={Schaefer, H. H.},
   author={Wolff, M. P.},
   title={Topological vector spaces},
   series={Graduate Texts in Mathematics},
   volume={3},
   edition={2},
   publisher={Springer-Verlag, New York},
   date={1999},
   pages={xii+346},
   isbn={0-387-98726-6},
   doi={10.1007/978-1-4612-1468-7},
}

\bib{Shubin}{book}{
   author={Shubin, M. A.},
   title={Pseudodifferential operators and spectral theory},
   edition={2},
   note={Translated from the 1978 Russian original by Stig I. Andersson},
   publisher={Springer-Verlag},
   place={Berlin},
   date={2001},
   pages={xii+288},
   isbn={3-540-41195-X},
   doi={10.1007/978-3-642-56579-3},
}

\bib{Wall}{book}{
   author={Wallach, Nolan R.},
   title={Real reductive groups. I},
   series={Pure and Applied Mathematics},
   volume={132},
   publisher={Academic Press Inc.},
   place={Boston, MA},
   date={1988},
   pages={xx+412},
   isbn={0-12-732960-9},
}

\end{biblist} \end{bibdiv}

{\small Mathematisches Institut\\
Auf der Morgenstelle 10\\
72076 T\"ubingen\\
Germany\\
\tt deitmar@uni-tuebingen.de}

\today

\end{document}